\def\verbatim@font{\usefont{OT1}{cmtt}{m}{n}}
\newsavebox{\ChapterTitle}
\newcommand{\ps@spawn}{%
    \renewcommand{\@oddhead}{{\usebox{\ChapterTitle}\hfil\thepage}\hspace{-13.83cm}\rule[-8pt]{13.83cm}{0.5pt}}%
    \renewcommand{\@evenhead}{{\thepage\hfil {\selectlanguage{english} Chapter} \thechapter}\hspace{-13.83cm}\rule[-8pt]{13.83cm}{0.5pt}}%
    \renewcommand{\@evenfoot}{}%
    \renewcommand{\@oddfoot}{\@evenfoot}}
\definecolor{mygray}{gray}{0.9}
\newcommand{\ci}[1]{_{ {}{\scriptstyle #1}}}
\newtheorem{thm}{Theorem}[section]
\newtheorem{prop}[thm]{Proposition}
\newtheorem{lemma}[thm]{Lemma}
\theoremstyle{remark}
\newtheorem{remark}[thm]{Remark}
\numberwithin{equation}{section}
\begin{document}

\title{A note on a two-weight estimate for the dyadic square function}
\author{Spyridon Kakaroumpas}
\date{}
\maketitle

\begin{abstract}
We show that the two-weight estimate for the dyadic square function proved by Lacey--Li in \cite{Lacey-Li} is sharp.
\end{abstract}

\section{Introduction}

This note deals with two-weight estimates for dyadic square functions. More precisely, fix a positive integer $d$. We will say that a subset $Q$ of $\mathbb{R}^d$ is a cube (interval for the case $d=1$) if there exist $a_1,b_1,\ldots,a_d,b_d\in\mathbb{R}$ with $a_i<b_i,\;i=1,\ldots,d$, such that $Q=[a_1,b_1)\times\ldots\times[a_d,b_d)$. Then, a subset $Q$ of $\mathbb{R}^d$ is called a dyadic cube if there exist $n,k_1,\ldots,k_d\in\mathbb{Z}$ such that
\begin{equation*}
Q=\left[\frac{k_1}{2^n},\frac{k_1+1}{2^n}\right)\times\ldots\times\left[\frac{k_d}{2^n},\frac{k_d+1}{2^n}\right).
\end{equation*}
Let $Q_0$ be either $\mathbb{R}^d$ or a fixed dyadic cube of $\mathbb{R}^d$. Let $\mathcal{D}$ be the family of all dyadic cubes contained in $Q_0$. For all $Q\in\mathcal{D}$, we denote by $\text{ch}(Q)$ the family of all dyadic children of $Q$, i.e. the family of all maximal dyadic cubes of $\mathbb{R}^d$ contained in $Q$. Then, for all $f\in L^1_{\text{loc}}(Q_0)$ (locally integrable means here that $f$ is integrable over all cubes contained in $Q_0$), for all $Q\in\mathcal{D}$, we consider the function $\Delta_{\ci{Q}}f$ on $Q_0$ given by
\begin{equation*}
\Delta_{\ci{Q}}f=\sum_{R\in\text{ch}(Q)}\langle f\rangle_{\ci{R}}1_{\ci{R}}-\langle f\rangle_{\ci{Q}}1_{\ci{Q}},
\end{equation*}
where for all cubes $R$ in $\mathbb{R}^d$, we denote by $1_{\ci{R}}$ the characteristic function of $R$ and we also set $\langle f\rangle_{\ci{R}}=\frac{1}{|R|}\int_{R}f(x)dx$ ($dx$ and $|\cdot|$ denoting usual Lebesgue measure on $\mathbb{R}^d$). Then, for all $f\in L^1_{\text{loc}}(Q_0)$, we define the dyadic square function $Sf$ on $Q_0$ by
\begin{equation*}
Sf(x)=\left(\sum_{Q\in\mathcal{D}}(\Delta_{\ci{Q}}f(x))^2\right)^{\frac{1}{2}},\;\forall x\in Q_0.
\end{equation*}
A measurable function $\sigma$ on $Q_0$ is called a weight if it is positive a.e. on $Q_0$ and locally integrable over $Q_0$. Given a weight $\sigma$ on $Q_0$, one can define as in \cite{Lacey-Li} p. 2 the $A_{\infty}$ characteristics
\begin{equation*}
[\sigma]_{\ci{A_{\infty}}}:=\sup_{Q\text{ cube contained in }Q_0}\frac{1}{\sigma(Q)}\int_{Q}M_{\ci{Q}}\sigma(x)dx,
\end{equation*}
\begin{equation*}
[\sigma]_{\ci{A_{\infty},\mathcal{D}}}:=\sup_{Q\in\mathcal{D}}\frac{1}{\sigma(Q)}\int_{Q}M_{\ci{Q,\mathcal{D}}}\sigma(x)dx,
\end{equation*}
where
\begin{equation*}
M_{\ci{Q}}\sigma (x):=
\sup_{\substack{R\text{ cube contained in }Q\\x\in R}}\langle\sigma\rangle_{\ci{R}},\;\forall x\in Q_0,
\end{equation*}
for all cubes $Q$ contained in $Q_0$, and
\begin{equation*}
M_{\ci{Q,\mathcal{D}}}\sigma (x):=
\sup_{\substack{R\in\mathcal{D},\; R\subseteq Q\\x\in R}}\langle\sigma\rangle_{\ci{R}},\;\forall x\in Q_0,
\end{equation*}
for all $Q\in\mathcal{D}$ (suprema over empty sets are set to be equal to 0). Moreover, given weights $\sigma,w$ on $Q_0$ and $p\in(1,\infty)$, we define as in \cite{Lacey-Li} p. 2 the joint $A_p$ characteristics
\begin{equation*}
[w,\sigma]_{\ci{A_{p}}}:=\sup_{Q\text{ cube contained in }Q_0}\langle w\rangle_{\ci{Q}}\langle \sigma\rangle_{\ci{Q}}^{p-1},
\end{equation*}
\begin{equation*}
[w,\sigma]_{\ci{A_{p},\mathcal{D}}}:=\sup_{Q\in\mathcal{D}}\langle w\rangle_{\ci{Q}}\langle \sigma\rangle_{\ci{Q}}^{p-1},
\end{equation*}
and we also define the Muckenhoupt $A_{p}$ characteristics
\begin{equation*}
[w]_{\ci{A_{p}}}:=[w,w^{-\frac{1}{p-1}}]_{\ci{A_{p}}},
\end{equation*}
\begin{equation*}
[w,\sigma]_{\ci{A_{p},\mathcal{D}}}:=[w,w^{-\frac{1}{p-1}}]_{\ci{A_{p},\mathcal{D}}}.
\end{equation*}
It is proved in \cite{Convex} Lemma 4.1. that for all weights $w$ on $Q_0$ and for all $p\in(1,\infty)$ there holds $1\leq[\sigma]_{\ci{A_{\infty},\mathcal{D}}}\lesssim_{p} [\sigma]_{\ci{A_{p},\mathcal{D}}}$ and $1\leq[\sigma]_{\ci{A_{\infty}}}\lesssim_{d,p} [\sigma]_{\ci{A_{p}}}$.
Finally, for all weights $\sigma$ on $Q_0$, we consider the space $L^{p}(\sigma):=L^{p}(Q_0,\sigma(x)dx)$. The following estimate is then proved in \cite{Lacey-Li} using a sparse domination argument similar to the one in \cite{Laceymartingale} and a two-weight estimate for square sparse functions.

\begin{thm}\emph{(\cite{Lacey-Li} Theorem 1.2.)}
For all weights $\sigma,w$ on $Q_0$, for all $p\in(1,\infty)$, there holds
\begin{equation*}
||S(\cdot\sigma)||_{\ci{L^p(\sigma)\rightarrow L^{p}(w)}}\lesssim_{p,n}
\begin{cases}
[w,\sigma]_{\ci{A_{p},\mathcal{D}}}^{\frac{1}{p}}[\sigma]_{\ci{A_{\infty},\mathcal{D}}}^{\frac{1}{p}},\text{ if }p\leq 2\\
[w,\sigma]_{\ci{A_{p},\mathcal{D}}}^{\frac{1}{p}}([w]_{\ci{A_{\infty},\mathcal{D}}}^{\frac{1}{2}-\frac{1}{p}}+[\sigma]_{\ci{A_{\infty},\mathcal{D}}}^{\frac{1}{p}}),\text{ if }p>2
\end{cases}
.
\end{equation*}
\end{thm}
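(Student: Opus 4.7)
My plan is to proceed via the sparse domination approach outlined in the introduction, following \cite{Laceymartingale,Lacey-Li}. The first step is to establish, for each $f\in L^{p}(\sigma)$, the existence of a sparse family $\mathcal{S}=\mathcal{S}(f)\subset\mathcal{D}$ such that the pointwise estimate
\begin{equation*}
Sf(x)\lesssim\left(\sum_{Q\in\mathcal{S}}\langle|f|\rangle_{\ci{Q}}^{2}\,1_{\ci{Q}}(x)\right)^{1/2},\quad\text{a.e. }x\in Q_{0},
\end{equation*}
holds. This is a consequence of a standard stopping-time argument relying on the $L^{2}$-orthogonality of the martingale differences $\Delta_{\ci{Q}}f$; the exponent $2$ on the averages reflects the square-function nature of $S$. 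The task then reduces to proving the two-weight estimate on $L^{p}(\sigma)\to L^{p}(w)$ for the square sparse operator $f\mapsto\bigl(\sum_{Q\in\mathcal{S}}\langle f\sigma\rangle_{\ci{Q}}^{2}\,1_{\ci{Q}}\bigr)^{1/2}$, where $\mathcal{S}\subset\mathcal{D}$ is any sparse family.

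For $p\le 2$, I would use the pointwise subadditivity $(\sum a_{i})^{p/2}\le\sum a_{i}^{p/2}$ (valid since $p/2\le 1$) to obtain
\begin{equation*}
\left\|\left(\sum_{Q\in\mathcal{S}}\langle f\sigma\rangle_{\ci{Q}}^{2}\,1_{\ci{Q}}\right)^{1/2}\right\|_{\ci{L^{p}(w)}}^{p}\le\sum_{Q\in\mathcal{S}}\langle f\sigma\rangle_{\ci{Q}}^{p}\,w(Q).
\end{equation*}
Each summand factors as $\langle\sigma\rangle_{\ci{Q}}^{p-1}\langle w\rangle_{\ci{Q}}\cdot\sigma(Q)\cdot\bigl(\sigma(Q)^{-1}\int_{Q}f\sigma\,dx\bigr)^{p}$, whose first bracket is bounded by $[w,\sigma]_{\ci{A_{p},\mathcal{D}}}$. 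The remaining $Q$-sum is then handled by a $\sigma$-weighted dyadic Carleson embedding: the sequence $\{\sigma(Q)\}_{Q\in\mathcal{S}}$ is a $\sigma$-Carleson sequence with constant $\lesssim[\sigma]_{\ci{A_{\infty},\mathcal{D}}}$ (essentially by definition of the $A_{\infty}$ characteristic), and the embedding controls the remaining sum by a constant multiple of $[\sigma]_{\ci{A_{\infty},\mathcal{D}}}\,\|f\|_{\ci{L^{p}(\sigma)}}^{p}$ via the $L^{p}(\sigma)$-boundedness of the $\sigma$-weighted dyadic maximal function. Taking $p$-th roots yields the claimed bound $[w,\sigma]_{\ci{A_{p},\mathcal{D}}}^{1/p}[\sigma]_{\ci{A_{\infty},\mathcal{D}}}^{1/p}$.

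For $p>2$, subadditivity fails and I would dualize. Since $p/2>1$,
\begin{equation*}
\left\|\left(\sum_{Q\in\mathcal{S}}\langle f\sigma\rangle_{\ci{Q}}^{2}\,1_{\ci{Q}}\right)^{1/2}\right\|_{\ci{L^{p}(w)}}^{2}=\sup_{g}\sum_{Q\in\mathcal{S}}\langle f\sigma\rangle_{\ci{Q}}^{2}\,\langle gw\rangle_{\ci{Q}}\,|Q|,
\end{equation*}
where the supremum is over nonnegative $g$ with $\|g\|_{\ci{L^{(p/2)'}(w)}}=1$. This bilinear sparse form admits two natural estimates. The first mimics the case $p\le 2$: factoring $\langle f\sigma\rangle_{\ci{Q}}^{2}\langle gw\rangle_{\ci{Q}}|Q|$ so as to extract a full $[w,\sigma]_{\ci{A_{p},\mathcal{D}}}$ and then applying the $\sigma$-Carleson embedding reproduces the factor $[w,\sigma]_{\ci{A_{p},\mathcal{D}}}^{1/p}[\sigma]_{\ci{A_{\infty},\mathcal{D}}}^{1/p}$. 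The second applies Hölder's inequality on the index $Q\in\mathcal{S}$ with exponents $p/2$ and $(p/2)'$ so that the $f$-factor is estimated by a $\sigma$-Carleson embedding and the $g$-factor by a $w$-Carleson embedding, yielding $[w,\sigma]_{\ci{A_{p},\mathcal{D}}}^{1/p}[w]_{\ci{A_{\infty},\mathcal{D}}}^{1/2-1/p}$. Taking the better of the two bounds gives the stated maximum. The main obstacle is precisely the second estimate: one has to distribute the factors $\langle\sigma\rangle_{\ci{Q}}$, $\langle w\rangle_{\ci{Q}}$, and $|Q|$ between the two Hölder pieces so that the $A_{p}$ weight is consumed at the sharp rate $1/p$ on each side, while simultaneously producing the sharp exponent $\frac{1}{2}-\frac{1}{p}$ on $[w]_{\ci{A_{\infty},\mathcal{D}}}$ rather than a cruder value such as $\frac{1}{2}$.
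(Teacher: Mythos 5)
The note you are commenting on does not actually prove this theorem: it is imported from \cite{Lacey-Li}, with only the remark that the proof there goes through a sparse domination of the dyadic square function together with a two-weight estimate for sparse square functions. Your plan follows that intended route, and the $p\le 2$ half is essentially sound (granting the sparse domination step): the factorization $\langle f\sigma\rangle_{Q}^{p}w(Q)=\langle w\rangle_{Q}\langle\sigma\rangle_{Q}^{p-1}\cdot\sigma(Q)\cdot\bigl(\sigma(Q)^{-1}\int_{Q}f\sigma\bigr)^{p}$, the fact that a sparse family satisfies $\sum_{Q\in\mathcal{S},\,Q\subseteq R}\sigma(Q)\lesssim[\sigma]_{A_{\infty},\mathcal{D}}\,\sigma(R)$ (this uses sparseness, not just the definition of $A_\infty$), and the $\sigma$-weighted dyadic Carleson embedding give exactly $[w,\sigma]_{A_{p},\mathcal{D}}^{1/p}[\sigma]_{A_{\infty},\mathcal{D}}^{1/p}$.

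For $p>2$, however, the proposal contains a genuine error, not merely an unfinished step: both of the ``two natural estimates'' you claim for the dualized sparse form are false as stated. A bound of the form $\|S(\cdot\sigma)\|_{L^{p}(\sigma)\to L^{p}(w)}\lesssim_{p}[w,\sigma]_{A_{p}}^{1/p}[\sigma]_{A_{\infty}}^{1/p}$ for $p>2$ is ruled out by Proposition \ref{countprop} of this very note, which produces weights with $[w,\sigma]_{A_{p}}<\infty$, $[\sigma]_{A_{\infty}}<\infty$ and $f\in L^{p}(\sigma)$ with $S(f\sigma)\notin L^{p}(w)$; after dualization the factor $\langle gw\rangle_{Q}$ cannot be absorbed by the $\sigma$-Carleson embedding, and handling it forces a $w$-Carleson embedding, which is exactly where $[w]_{A_{\infty},\mathcal{D}}$ must enter. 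Symmetrically, a bound $\lesssim_{p}[w,\sigma]_{A_{p}}^{1/p}[w]_{A_{\infty}}^{1/2-1/p}$ alone is ruled out by the Section 3 example, where $[w]_{A_{\infty}}\lesssim_{p}1$ and $[w,\sigma]_{A_{p}}\sim_{p}(1-\beta)^{1-p}$, yet the operator norm is $\gtrsim_{p}(1-\beta)^{-1}\gg(1-\beta)^{1/p-1}$ as $\beta\to1^{-}$. Consequently the logic ``take the better of the two bounds'' cannot work: the theorem's right-hand side is comparable to the \emph{maximum} of your two expressions, not the minimum, and each expression has to be extracted from a different part of the estimate rather than from the whole form. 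Moreover, the H\"older split you describe (exponents $p/2$ and $(p/2)'$, then $\sigma$- and $w$-Carleson embeddings) actually yields the product $[w,\sigma]_{A_{p},\mathcal{D}}^{1/p}[\sigma]_{A_{\infty},\mathcal{D}}^{1/p}[w]_{A_{\infty},\mathcal{D}}^{1/2-1/p}$, not $[w,\sigma]^{1/p}[w]^{1/2-1/p}$; upgrading this product to the additive bound is precisely the content of the Lacey--Li argument, and it is the step you explicitly defer as ``the main obstacle.'' As it stands, the proposal establishes the $p\le2$ case modulo the sparse domination lemma, but not the $p>2$ case, which is the part of the theorem this note is concerned with.
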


Note that the above estimate is not symmetric with respect to the weights $\sigma$ and $w$. One should not expect a priori any symmetry between the two weights, because the square function is not a linear operator, and even when viewed as a vector-valued linear operator, its adjoint is not of the ``same form'' (in contrast to the case of Calder$\acute{\text{o}}$n--Zygmund operators, which are linear and their adjoint is also a Calder$\acute{\text{o}}$n--Zygmund operator; see \cite{Hyt-Lacey} Theorem 1.2.). One may then ask whether the presence of some term involving $[w]_{\ci{A_{\infty},\mathcal{D}}}$ in the above  estimate for the case $p>2$ is really necessary, and not just an artifact of the proof given in \cite{Lacey-Li}. The following result answers this to the positive.

\begin{prop}\label{countprop}
For all $p\in(2,\infty)$, there exist weights $w,\sigma$ on $\mathbb{R}$ with $[\sigma]_{\ci{A_{\infty}}}<\infty$, $[w,\sigma]_{\ci{A_{p}}}<\infty$ and $f\in L^{p}(\sigma)$ with $\text{supp}(f)\subseteq[0,1]$, such that $S(f\sigma)\notin L^{p}(w)$ (and $S$ can be taken to be just the square function over $[0,1)$).
\end{prop}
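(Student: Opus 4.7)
The plan is to construct the counterexample via a direct-sum of scaled building blocks. As a first step, for each $N\geq 1$ I would build a local triple $(w_N,\sigma_N,f_N)$ on a dyadic subinterval of $[0,1)$ for which $[\sigma_N]_{A_\infty}$ and $[w_N,\sigma_N]_{A_p}$ are bounded uniformly in $N$, but for which the ratio $\|S(f_N\sigma_N)\|_{L^p(w_N)}/\|f_N\|_{L^p(\sigma_N)}$ grows like $N^{\gamma}$ for some $\gamma>0$. A natural model is to take $\sigma_N$ to be essentially a power weight, say $\sigma_N(x)\sim x^{1/(p-1)}$ near $0$, so that $[\sigma_N]_{A_\infty}$ is universally bounded, and $w_N$ close to $\sigma_N^{1-p}\sim x^{-1}$ corrected by a slowly-varying factor (e.g.\ a power of $\log(1/x)$ depending on $N$) designed so that $\sigma_N$ and $w_N^{-1/(p-1)}$ differ on small scales by a factor of order $\log N$. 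This imbalance is what makes $[w_N]_{A_\infty}$ grow, and by Theorem~1.1 this is precisely the mechanism that permits the operator norm to blow up while the joint $A_p$ characteristic stays bounded.

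For the function $f_N$ I would take a linear combination $\sum_{k=0}^{N} c_k h_{I_k}$ of Haar functions on the nested chain $I_k=[0,2^{-k})$, with coefficients $c_k$ tuned so that, after multiplication by $\sigma_N$, the square function $S(f_N\sigma_N)$ concentrates where $w_N$ is largest, while $\|f_N\|_{L^p(\sigma_N)}$ remains controlled by a telescoping sum. Checking the joint $A_p$ characteristic amounts to estimating $\langle w_N\rangle_Q \langle \sigma_N\rangle_Q^{p-1}$ on dyadic cubes $Q$, a standard power-weight computation where the logarithmic correction in $w_N$ averages out; $[\sigma_N]_{A_\infty}$ is similarly controlled because $\sigma_N$ is essentially a power weight.

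The second step is to glue: place rescaled copies of $(w_N,\sigma_N,f_N)$ on pairwise disjoint dyadic subintervals $J_N\subset(0,1)$, with amplitudes $\lambda_N$ and $\mu_N$ selected so that $\sum_N \mu_N^p \|f_N\|_{L^p(\sigma_N)}^p$ converges while $\sum_N \mu_N^p \|S(f_N\sigma_N)\|_{L^p(w_N)}^p$ diverges; the polynomial gap from step one makes this possible (for instance with $\mu_N\sim N^{-\alpha}$ and $1/p<\alpha<1/p+\gamma$). The globally defined weights $w=\sum_N\lambda_N w_N 1_{J_N}$ and $\sigma=\sum_N\lambda_N \sigma_N 1_{J_N}$ inherit bounded $[\sigma]_{A_\infty}$ and $[w,\sigma]_{A_p}$: on cubes inside one $J_N$ the characteristics are those of the building block, and on cubes spanning several $J_N$ the relevant averages are weighted averages of uniformly bounded local averages. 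The resulting $f:=\sum_N\mu_N f_N 1_{J_N}$ then lies in $L^p(\sigma)$, is supported in $[0,1]$, and satisfies $S(f\sigma)\notin L^p(w)$ by the divergent-sum construction.

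The main obstacle is the quantitative lower bound in step one: arranging the Haar coefficients $c_k$ so that the weighted square function ratio actually grows. For naive choices the logarithmic asymmetry between $w_N$ and $\sigma_N^{1-p}$ tends to cancel when passed through the square function and one obtains a ratio of order one; getting a power-in-$N$ growth will require tuning the $c_k$ carefully to the imbalance, for instance via a Bellman-function style optimization or by importing a known sharp lower bound for a relevant square-function testing quantity.
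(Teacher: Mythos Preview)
Your direct-sum outline is the same architecture as the paper's first proof (Section~5.1), and it is correct at that level. The gap you yourself flag---the quantitative lower bound for the building block---is real, and your proposed cures (logarithmic corrections to $w_N$, Bellman-function optimization of Haar coefficients) are working much too hard.

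No slowly-varying correction is needed. Take pure power weights $\sigma_\beta(x)=x^{\beta/(p-1)}$ and $w_\beta(x)=(1-\beta)x^{-\beta}$ on $[0,1)$ with $\beta\uparrow 1$; the scalar factor $(1-\beta)$ renormalizes the joint characteristic so that $[w_\beta,\sigma_\beta]_{A_p}\lesssim_p 1$ uniformly, while $[\sigma_\beta]_{A_\infty}\lesssim_p 1$ trivially. The right test function is not a tuned Haar sum but the single explicit choice
\[
f_\beta(x)=(-1)^{\lfloor-\log_2 x\rfloor}\,x^{-\beta/(p-1)},
\]
which makes $f_\beta\sigma_\beta$ the alternating function $(-1)^{\lfloor-\log_2 x\rfloor}$ on $(0,1)$. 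A geometric series gives $\langle f_\beta\sigma_\beta\rangle_{I_k}=(-1)^k/3$ for $I_k=[0,2^{-k})$, so every martingale difference has constant magnitude $|\Delta_{I_k}(f_\beta\sigma_\beta)|=2/3$; there is nothing to optimize. Summing $n$ such terms and integrating against $w_\beta$ yields $\|S(f_\beta\sigma_\beta)\|_{L^p(w_\beta)}/\|f_\beta\|_{L^p(\sigma_\beta)}\gtrsim_p(1-\beta)^{1/p-1/2}$, providing exactly the $\gamma=\tfrac12-\tfrac1p>0$ you need. This is the content of Section~4, reused as the building block in Section~5.1.

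One further point your gluing paragraph glosses over: the claim that intervals spanning several $J_N$ see only ``weighted averages of uniformly bounded local averages'' is fine for the \emph{dyadic} $A_p$ characteristic but false as stated for the full one, because adjacent blocks abut at endpoints where the power singularities sit. The paper handles this by alternating the orientation---placing a straight copy on odd $J_k$ and a reflected copy on even $J_k$---so that neighboring blocks share the same singularity across their common endpoint; this is the reflection trick of Lemma~2.2, applied in rescaled form.
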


More generally, one may ask whether the exponent $\frac{1}{p}$ for $[\sigma]_{\ci{A_{\infty},\mathcal{D}}}$ and the exponent $\frac{1}{2}-\frac{1}{p}$ for $[w]_{\ci{A_{\infty},\mathcal{D}}}$ (in the case $p>2$) are optimal. It turns out that this is the case. More precisely, we prove the following.

\begin{prop}\label{expprop}
(i) Let $p\in(1,2]$. Let $\psi:[1,\infty)\rightarrow[1,\infty)$ be an increasing function with $\psi(1)=1$, such that for all $d\in\mathbb{N}\setminus\lbrace0\rbrace$ and for all weights $w,\sigma$ on $\mathbb{R}^{d}$ with $[\sigma]_{\ci{A_{\infty}}}<\infty$ there holds
\begin{equation*}
||S(\cdot\sigma)||_{\ci{L^p(\sigma)\rightarrow L^{p}(w)}}\lesssim_{p,d}
[w,\sigma]_{\ci{A_{p}}}^{\frac{1}{p}}\psi([\sigma]_{\ci{A_{\infty}}}).
\end{equation*}
Then, there exists an absolute constant $A>0$ with $\psi(x)\gtrsim_{p}x^{\frac{1}{p}}$, for all $x>A$.

(ii) Let $p\in(2,\infty)$. Let $\phi,\psi:[1,\infty)\rightarrow[1,\infty)$ be increasing functions with $\phi(1)=\psi(1)=1$ (and depending only on $p$), such that for all $d\in\mathbb{N}\setminus\lbrace0\rbrace$ and for all weights $w,\sigma$ on $\mathbb{R}^{d}$ with $[\sigma]_{\ci{A_{\infty}}}<\infty$ and $[w]_{\ci{A_{\infty}}}<\infty$ there holds
\begin{equation*}
||S(\cdot\sigma)||_{\ci{L^p(\sigma)\rightarrow L^{p}(w)}}\lesssim_{p,d}
[w,\sigma]_{\ci{A_{p}}}^{\frac{1}{p}}(\phi([w]_{\ci{A_{\infty}}})+\psi([\sigma]_{\ci{A_{\infty}}})).
\end{equation*}
Then, there exist absolute constants $A,B>0$ with $\psi(x)\gtrsim_{p}x^{\frac{1}{p}}$, for all $x>A$ and $\phi(x)\gtrsim_{p}x^{\frac{1}{2}-\frac{1}{p}}$, for all $x>B$.
\end{prop}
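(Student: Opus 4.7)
My strategy in both parts is to construct, for every large parameter $N$, explicit one-parameter families of weights on $[0,1)$ (taking $d=1$ and $Q_0=[0,1)$) together with suitable test functions, for which each of the characteristics appearing on the right-hand side of the hypothesized estimate and the ratio $\|S(f_N\sigma_N)\|_{\ci{L^p(w_N)}}/\|f_N\|_{\ci{L^p(\sigma_N)}}$ can be estimated precisely. Substituting the resulting quantities into the hypothesized inequality and letting $N\to\infty$ then forces the claimed polynomial lower bounds on $\psi$ and on $\phi$.

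For part (i), fix $p\in(1,2]$ and a large integer $N$. I would take $\sigma_N$ to be a ``geometric tower'' supported on a nested chain of dyadic intervals $J_0\supsetneq J_1\supsetneq\cdots\supsetneq J_N$ with $|J_k|=2^{-k}$, with values chosen so that each scale $J_k$ contributes $O(1)$ to the Fujii--Wilson functional defining $[\sigma_N]_{\ci{A_\infty}}$; this should give $[\sigma_N]_{\ci{A_\infty}}\simeq N$. I pair $\sigma_N$ with a companion weight $w_N$ supported on the complementary ``rings'' $J_{k-1}\setminus J_k$ (so that $w_N$ and $\sigma_N$ are essentially disjointly supported), tuned to keep $[w_N,\sigma_N]_{\ci{A_p}}$ uniformly bounded in $N$. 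A suitable test function $f_N$ -- a normalized linear combination of Haar-type functions adapted to the tower -- should then satisfy $\|S(f_N\sigma_N)\|_{\ci{L^p(w_N)}}\gtrsim N^{1/p}\|f_N\|_{\ci{L^p(\sigma_N)}}$, because each of the $N$ levels contributes an independent, comparable piece to $S(f_N\sigma_N)$ on a set where $w_N$ has mass of the right order. Plugging into the hypothesized estimate gives $\psi(N)\gtrsim_p N^{1/p}$ for $N$ large, which is the claim.

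For part (ii), $p>2$, the lower bound $\psi(x)\gtrsim_p x^{1/p}$ is obtained by essentially the same construction, with $w_N$ additionally arranged so that $[w_N]_{\ci{A_\infty}}$ remains bounded uniformly in $N$ (so that the $\phi$-term in the hypothesized estimate is harmless). To obtain $\phi(x)\gtrsim_p x^{1/2-1/p}$, I would reverse the roles: build a tower construction for $w_N$ making $[w_N]_{\ci{A_\infty}}\simeq N$ while keeping $[\sigma_N]_{\ci{A_\infty}}$ and $[w_N,\sigma_N]_{\ci{A_p}}$ bounded, and choose a test function $f_N$ so that $S(f_N\sigma_N)$ produces $\simeq N$ well-separated contributions of comparable size. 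For $p>2$, the passage from the $\ell^2$-sum that defines $S$ to the $L^p(w_N)$ norm generically introduces a factor of $N^{1/2-1/p}$ via the $\ell^2\hookrightarrow\ell^p$ inequality applied to comparable blocks, and this is exactly what should be harvested as $\phi(N)\gtrsim_p N^{1/2-1/p}$.

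The principal obstacle lies in part (ii), specifically in the $\phi$-bound: the exponent $\frac{1}{2}-\frac{1}{p}$ is not of Muckenhoupt type and does not arise from any $A_p$- or $A_\infty$-type defect, but rather from the genuinely $\ell^2$-into-$L^p$ nature of $S$ when $p>2$. The construction must therefore place the ``action'' of $S(f_N\sigma_N)$ on many disjoint dyadic scales simultaneously while at the same time keeping $[\sigma_N]_{\ci{A_\infty}}$ bounded and not letting $[w_N,\sigma_N]_{\ci{A_p}}$ grow. Balancing these three constraints without sacrificing the $N^{1/2-1/p}$ factor in the operator-norm lower bound is the delicate step, and will likely require either a careful randomized construction or a Bellman-function-style optimization, rather than a purely geometric tower.
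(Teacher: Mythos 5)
There is a genuine gap: what you have written is a plan whose essential objects are never constructed or verified, so none of the claimed conclusions are actually established. For part (i) you posit a ``tower'' weight $\sigma_N$ with $[\sigma_N]_{A_\infty}\simeq N$, a companion $w_N$ with $[w_N,\sigma_N]_{A_p}\lesssim 1$, and a test function with $\|S(f_N\sigma_N)\|_{L^p(w_N)}\gtrsim N^{1/p}\|f_N\|_{L^p(\sigma_N)}$, but you exhibit neither the weights nor $f_N$, and you verify none of the three estimates; controlling the \emph{non-dyadic} $A_p$ and Fujii--Wilson $A_\infty$ characteristics is exactly where the work lies (in the paper this is Lemma 2.1, a careful case analysis for power weights on all of $\mathbb{R}$). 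Relatedly, the hypothesized estimate in the Proposition is assumed only for weights on $\mathbb{R}^d$, with characteristics taken over all cubes of $\mathbb{R}^d$, so setting $Q_0=[0,1)$ does not let you invoke it: you must either define the weights globally from the start (as the paper does, using $\sigma(x)=|x|^{-\beta}$, $w(x)=|x|^{\beta(p-1)}$, $f=1_{[0,1]}$ for the $\psi$-bound) or extend them from $[0,1)$ to $\mathbb{R}$ without destroying $[w,\sigma]_{A_p}$, which requires an argument of the type of the paper's Lemma 2.2. Note also that the paper does not (and need not) keep $[w,\sigma]_{A_p}$ bounded in its families; it lets it blow up like $(1-\beta)^{1-p}$ (resp. $(1-\beta)^{-1}$) and balances all the exponents in the final inequality, which is simpler than the normalization you propose.

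The more serious issue is the $\phi$-bound in part (ii), which you yourself concede is unresolved (``will likely require either a careful randomized construction or a Bellman-function-style optimization''). The heuristic that the $\ell^2\hookrightarrow\ell^p$ embedding ``generically'' yields a factor $N^{1/2-1/p}$ is not a mechanism: one must produce a concrete $f$ whose martingale differences are of comparable size on $\sim N$ consecutive scales carrying essentially all of the $w$-mass, with no cancellation and without inflating $\|f\|_{L^p(\sigma)}$, while simultaneously keeping $[\sigma]_{A_\infty}$ bounded. The paper achieves this with completely explicit data and no randomness: $\sigma(x)=|x|^{\beta/(p-1)}$, $w(x)=|x|^{-\beta}$, and $f(x)=(-1)^{\lfloor-\log_2|x|\rfloor}|x|^{-\beta/(p-1)}1_{(0,1)}(x)$. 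The alternating signs give $\langle f\sigma\rangle_{I_k}=(-1)^k/3$ on $I_k=[0,2^{-k})$, hence $|\Delta_{I_k}(f\sigma)|\sim 1$ for every $k$, so $S(f\sigma)\gtrsim n^{1/2}$ on $[2^{-n},2^{-n+1})$; integrating against $w$ gives $\|S(f\sigma)\|_{L^p(w)}\gtrsim_p(1-\beta)^{-\frac12-\frac1p}$, and combining with $[w,\sigma]_{A_p}\sim_p(1-\beta)^{-1}$, $[\sigma]_{A_\infty}\lesssim_p1$, $[w]_{A_\infty}\lesssim(1-\beta)^{-1}$, $\|f\|_{L^p(\sigma)}^p=(1-\beta)^{-1}$ yields $\phi(x)\gtrsim_p x^{\frac12-\frac1p}$ as $\beta\to1^-$. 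Until you supply constructions at this level of explicitness (or an alternative argument of comparable rigor) for both the $\psi$- and $\phi$-parts, together with the verification of the full $A_p$/$A_\infty$ characteristics on $\mathbb{R}$, the proposal does not prove the Proposition.
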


\textbf{Plan of the note.} In Section 2. we give the estimates of Muckenhoupt characteristics that will be important for our counterexamples. In Section 3., we prove (i) and the first half of (ii) of \hyperref[expprop]{Proposition \ref*{expprop}}, using the family of examples introduced by Lerner in \cite{Lerner} Section 5., (5.5). In Section 4., we prove the second half of (ii) of \hyperref[expprop]{Proposition \ref*{expprop}}, using a family of examples reminiscent of a counterexample for two-weight estimates for square functions in non-homogeneous settings due to Lai--Treil in \cite{Lai-Treil} Section 3. However, this family of examples does not directly yield an explicit counterexample as the one in \hyperref[countprop]{Proposition \ref*{countprop}}. Therefore, in Section 5. we present two explicit counterexamples of this kind. The first of these can be viewed as a modified version of a direct sum construction over the family of examples in Section 4. I am grateful to Professor S. Treil for suggesting the direct sum construction. The second counterexample follows very closely the strategy of Lai--Treil in \cite{Lai-Treil} Section 3.

\textbf{Acknowledgements.}
I am grateful to Professor S. Treil for suggesting the problem of this note to me and for his suggestions on improvements of various aspects of this note, leading in particular to more transparent estimates and to a significant simplification of the computations in the counterexample in Subsection 5.2. I am also grateful to Alexander Barron for reading a draft of this note and for pointing out typos and other obscurities.
\newline

\section{Preliminary estimates}

We will need two lemmas. The first one concerns Muckenhoupt characteristics of power weights.

\begin{lemma}\label{prelest} Let $\beta\in(0,1)$ and $p\in(1,\infty)$.

(i) Consider the weights $w,\sigma$ on $\mathbb{R}$ given by $w(x)=|x|^{-\beta}$ and $\sigma(x)=|x|^{\frac{\beta}{p-1}}$, for all $x\in\mathbb{R}\setminus\lbrace0\rbrace$, and $\sigma(0)=w(0)=1$. Then, there holds $[w]_{\ci{A_{p}}}=[w,\sigma]_{\ci{A_{p}}}\sim_{p}(1-\beta)^{-1}$. In particular, we have $[w]_{\ci{A_{\infty}}}\lesssim(1-\beta)^{-1}$ and $[\sigma]_{\ci{A_{\infty}}}\lesssim_{p}1$.

(ii) Consider the weights $w,\sigma$ on $\mathbb{R}$ given by $\sigma(x)=|x|^{-\beta}$ and $w(x)=|x|^{\beta(p-1)}$, for all $x\in\mathbb{R}\setminus\lbrace0\rbrace$, and $\sigma(0)=w(0)=1$. Then, there holds $[w,\sigma]_{\ci{A_{p}}}\sim_{p}(1-\beta)^{1-p}$. In particular, we have $[w]_{\ci{A_{\infty}}}\lesssim_{p}1$.
\end{lemma}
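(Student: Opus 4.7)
Both parts amount to direct computations with power weights on $\mathbb{R}$, exploiting the scaling invariance of $\langle w\rangle_Q\langle\sigma\rangle_Q^{p-1}$ under dilations of $Q$ and the reflection symmetry $x\mapsto -x$ of the weights.

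For part (i), observe first that $\sigma=w^{-1/(p-1)}$, so $[w,\sigma]_{A_p}=[w]_{A_p}$ holds by definition, disposing of the first equality. To estimate $[w]_{A_p}$ I would split the sup into intervals contained in $(0,\infty)$ (equivalently, in $(-\infty,0)$ by reflection) and intervals containing $0$. For $Q=[-a,b]\ni 0$ with $0\le a\le b$, setting $t=a/b\in[0,1]$ and computing directly yields
$$\langle w\rangle_Q\langle\sigma\rangle_Q^{p-1}=\frac{(1+t^{1-\beta})(1+t^{1+\beta/(p-1)})^{p-1}}{(1-\beta)(1+\beta/(p-1))^{p-1}(1+t)^p};$$
the dimensionless factor in the numerator divided by $(1+t)^p$ is continuous on $[0,1]$ and equals $1$ at both endpoints, so it lies between constants depending only on $p$, whence the product is $\sim_p(1-\beta)^{-1}$. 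The analogous normalization for $Q=[a,b]\subseteq(0,\infty)$ with $t=a/b\in[0,1)$ produces a factor $(1-t^{1-\beta})(1-t^{1+\beta/(p-1)})^{p-1}/(1-t)^p$ that, by Taylor expansion, tends to $(1-\beta)(1+\beta/(p-1))^{p-1}$ as $t\to 1^-$ and to $1$ as $t\to 0^+$, giving a bound of the same form (attained as $a\to 0$). The matching lower bound comes from $Q=[-1,1]$, which yields
$$\langle w\rangle_Q\langle\sigma\rangle_Q^{p-1}=\frac{(p-1)^{p-1}}{(1-\beta)(p-1+\beta)^{p-1}}\gtrsim_p(1-\beta)^{-1}.$$

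For the $A_\infty$ estimates in (i): a short case analysis on the subintervals $R\subseteq Q$ containing a given $x\in Q$ (distinguishing whether $R$ contains $0$ and comparing the sizes of the endpoints of $R$ to $|x|$) yields $M_Q w(x)\lesssim|x|^{-\beta}/(1-\beta)$ with an absolute constant; integrating over $Q$ and dividing by $w(Q)$ gives $[w]_{A_\infty}\lesssim(1-\beta)^{-1}$. Since $\sigma(x)=|x|^{\beta/(p-1)}$ has bounded positive exponent, the elementary bounds $M_Q\sigma\lesssim_p\|\sigma\|_{L^\infty(Q)}$ and $\sigma(Q)\gtrsim_p|Q|\cdot\|\sigma\|_{L^\infty(Q)}$ (valid for intervals on either side of, or containing, the origin) yield $[\sigma]_{A_\infty}\lesssim_p 1$.

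Part (ii) proceeds in the same way: for $Q=[-R,R]$ one computes exactly
$$\langle w\rangle_Q\langle\sigma\rangle_Q^{p-1}=\frac{1}{(1+\beta(p-1))(1-\beta)^{p-1}}\sim_p(1-\beta)^{1-p},$$
and off-center or half-line intervals are treated by the same dimensionless reductions as in (i); the estimate $[w]_{A_\infty}\lesssim_p 1$ follows from the positivity of the exponent $\beta(p-1)$ of $w$, exactly as in the $\sigma$-analysis above. The only mild obstacle in the whole argument is the bookkeeping needed to check that the dimensionless factors produced by the scaling reductions are bounded above and below by $p$-dependent constants on their parameter domains; this is routine elementary calculus, with the constants $(1-\beta)^{\pm 1}$ arising solely from the integral $\int_0^r y^{-\beta}\,dy=r^{1-\beta}/(1-\beta)$ and its analogues.
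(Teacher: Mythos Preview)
Your argument is correct and reaches the same conclusions, but it differs from the paper's proof in three places. First, for the $A_p$ bound in (i) the paper does a discrete case split ($a=0$; then $0<a<b/2$, which reduces to the first case since $[a,b)\subseteq[0,b)$ and $b-a\ge b/2$; then $a\ge b/2$, where both weights vary on $[a,b)$ by at most a factor depending on $p$), whereas you parametrise continuously by $t=a/b$ and analyse the resulting dimensionless factor. Both work, but note that your appeal to ``continuity on $[0,1]$ with equal endpoint values'' does not by itself give a bound uniform in $\beta$; what actually does the job is the crude $1\le 1+t^{\alpha}\le 2$, $1\le(1+t)^p\le 2^p$ for intervals straddling $0$, and $1-t^{1-\beta}\le 1-t$ together with $1-t^{1+\beta/(p-1)}\le(1+\tfrac{\beta}{p-1})(1-t)$ (convexity of $x\mapsto x^{1+\beta/(p-1)}$) for the half-line case. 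Second, for the $A_\infty$ claims the paper invokes the general inequality $[v]_{A_\infty}\lesssim_q[v]_{A_q}$, taking $q=2$ for $w$ and $q'$ with $q=(p+1)/2$ for $\sigma$ (so that the exponent governing $\sigma$ becomes $\beta/2$, safely bounded away from $1$), while you bound $M_Qw$ and $M_Q\sigma$ directly from the definition; your route is more self-contained, theirs is shorter once that general inequality is available. Third, for (ii) the paper observes $w=\sigma^{-1/(p'-1)}$ and uses the duality $[w,\sigma]_{A_p}=[\sigma,w]_{A_{p'}}^{p-1}$ to reduce everything, including $[w]_{A_\infty}\lesssim_p 1$, to part (i) with $p'$ in place of $p$, avoiding any new computation; your direct attack on symmetric intervals is correct but essentially repeats the work already done in (i).
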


\begin{proof}
(i) We first consider the cases of subintervals of $[0,\infty)$. Let $0\leq a<b$ be arbitrary. We distinguish the following cases:

(A) $a=0$. Then, by direct computation we have
\begin{eqnarray*}
\langle w\rangle_{[0,b)}\langle \sigma\rangle_{[0,b)}^{p-1}=(1-\beta)^{-1}\left(1+\frac{\beta}{p-1}\right)^{1-p}.
\end{eqnarray*}

It is easy to verify that $\frac{1}{e}\leq\left(1+\frac{\beta}{p-1}\right)^{1-p}\leq1$.

(B) $0<a<\frac{b}{2}$. Then, we have $[a,b)\subseteq [0,b)$ and $b-a\geq\frac{b}{2}$, therefore by case (A) we obtain
\begin{eqnarray*}
\langle w\rangle_{[a,b)}\langle\sigma\rangle_{[a,b)}^{p-1}\lesssim_{p}\langle w\rangle_{[0,b)}\langle\sigma\rangle_{[0,b)}^{p-1}\sim(1-\beta)^{-1}.
\end{eqnarray*}

(C) $a\geq\frac{b}{2}$. Then, we have $\frac{1}{2}w(a)\leq w(x)\leq w(a)$ and $\sigma(a)\leq\sigma(x)\leq 2^{\frac{1}{p-1}}\sigma(a)$, for all $x\in[a,b)$, therefore
\begin{eqnarray*}
\langle w\rangle_{(a,b)}\langle\sigma\rangle_{(a,b)}^{p-1}\sim w(a)\sigma(a)^{p-1}=1.
\end{eqnarray*}

Similarly we obtain $\langle w\rangle_{[a,b)}\langle\sigma\rangle_{[a,b)}^{p-1}\lesssim_{p} (1-\beta)^{-1}$, for all subintervals $I$ of $(-\infty,0)$.

Let now $a,b\in(0,\infty)$ be arbitrary. We will show that $\langle w\rangle_{[-a,b)}\langle\sigma\rangle_{[-a,b)}^{p-1}\lesssim_{p} (1-\beta)^{-1}$. Set $c=\max(a,b)$. We have
\begin{align*}
\langle w\rangle_{[-a,b)}\langle\sigma\rangle_{[-a,b)}^{p-1}
&\lesssim_{p}\frac{(w([-c,0))+w([0,c)))(\sigma([-c,0))^{p-1}+\sigma([0,c))^{p-1})}{(b+a)^{p}}\\
&\lesssim(1-\beta)^{-1}\frac{c^{p}}{(b+a)^{p}}\leq(1-\beta)^{-1}.
\end{align*}
This yields the desired conclusion for $[w,\sigma]_{\ci{A_{p}}}$. Note that the proof shows also that $\frac{(1-\beta)^{-1}}{e}\leq[w,\sigma]_{\ci{A_{p},\mathcal{D}}}\leq\max((1-\beta)^{-1},2)$.

We have $[w]_{\ci{A_{\infty}}}\lesssim[w]_{\ci{A_{2}}}\lesssim(1-\beta)^{-1}$. Set now $q=\frac{p+1}{2}$. Then, we have $q>1$ and $\delta=\frac{\beta}{p-1}(q-1)=\frac{\beta}{2}\in(0,1)$. Consider the weight $\rho$ on $\mathbb{R}$ given by $\rho(x)=|x|^{-\delta}$, for all $x\in\mathbb{R}\setminus\lbrace0\rbrace$ and $\rho(0)=1$. Then, we have $\sigma(x)=|x|^{\frac{\delta}{q-1}}$, for all $x\in\mathbb{R}\setminus\lbrace0\rbrace$, and therefore by the above we obtain
\begin{equation*}
[\sigma]_{\ci{A_{\infty}}}\lesssim_{q'}[\sigma]_{\ci{A_{q'}}}=[\rho,\sigma]_{\ci{A_{q}}}^{\frac{1}{q-1}}\lesssim_{q}\left(1-\frac{\beta}{2}\right)^{-\frac{1}{q-1}}\leq 2^{\frac{1}{q-1}},
\end{equation*}
therefore $[\sigma]_{\ci{A_{\infty}}}\lesssim_{p}1$.

(ii) Since $w=\sigma^{-\frac{1}{p'-1}}$, by (i) we have
\begin{equation*}
[w,\sigma]_{\ci{A_{p}}}=[\sigma,w]_{\ci{A_{p'}}}^{\frac{1}{p'-1}}=[\sigma,w]_{\ci{A_{p'}}}^{p-1}\lesssim_{p'} (1-\beta)^{1-p},
\end{equation*}
therefore $[w,\sigma]_{\ci{A_{p}}}\lesssim_{p}(1-\beta)^{1-p}$, and also $[w]_{\ci{A_{\infty}}}\lesssim_{p'}1$, i.e. $[w]_{\ci{A_{\infty}}}\lesssim_{p}1$.
\newline

\end{proof}
Power weights are the prototypes of weights having just one singularity. The second lemma shows that under certain conditions, one can extend such weights from $[0,1)$ to the whole real line in a way that joint Muckenhoupt characteristics are not destroyed.

\begin{lemma}\label{glob}
Let $p\in(1,\infty)$. Set $I_{k}=\left[0,\frac{1}{2^{k}}\right)$, for all $k=0,1,2,\ldots$. Let $w,\sigma$ be weights on $[0,1)$, such that the following hold:

(i) $[w,\sigma]_{\ci{A_{p}}}\lesssim_{p}1$.

(ii) $w(I_{k})\sigma(I_{l})^{p-1}\lesssim_{p}(|I_{k}|+|I_{l}|)^{p}$, for all $k,l=0,1,2,\ldots$

(iii) There exists $x_{0}\in\left[\frac{1}{2},1\right)$ (depending only on $p$), such that $w(x)\lesssim_{p}1$, for all $x\in[x_0,1)$ and $\sigma(x)\lesssim_{p}1$, for all $x\in[x_0,1)$.

Consider the weights $\widetilde{w},\widetilde{\sigma}$ on $\mathbb{R}$ given by
\begin{equation*}
\widetilde{w}(x)=
\begin{cases}
w(x-k+1),\;\forall x\in(k-1,k),\text{ if }k\text{ is odd}\\
w(k-x),\;\forall x\in(k-1,k),\text{ if }k\text{ is even}
\end{cases}
,\;\forall k\in\mathbb{Z},
\end{equation*}
\begin{equation*}
\widetilde{\sigma}(x)=
\begin{cases}
\sigma(x-k+1),\;\forall x\in(k-1,k),\text{ if }k\text{ is odd}\\
\sigma(k-x),\;\forall x\in(k-1,k),\text{ if }k\text{ is even}
\end{cases}
,\;\forall k\in\mathbb{Z},
\end{equation*}
\begin{equation*}
\widetilde{w}(k)=\widetilde{\sigma}(k)=1,\;\forall k\in\mathbb{Z}.
\end{equation*}
Then, there holds $[\widetilde{w},\widetilde{\sigma}]_{\ci{A_{p}}}\lesssim_{p}1$.
\end{lemma}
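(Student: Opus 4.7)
The plan is to exploit the fact that $\widetilde{w}$ and $\widetilde{\sigma}$ are $2$-periodic with ``singularities'' located precisely at the even integers, and to bound $\langle\widetilde{w}\rangle_I\langle\widetilde{\sigma}\rangle_I^{p-1}$ by a case analysis on $I$. First I would verify the three key structural identities that make the proof run: over any full period, $\int_{2m}^{2m+2}\widetilde{w}=2w([0,1))$ (and similarly for $\widetilde{\sigma}$), so $\widetilde{w}$-averages over intervals with even-integer endpoints equal $\langle w\rangle_{[0,1)}$; if $I=[-\alpha,\beta)$ with $0<\alpha,\beta\leq 1$ crosses the even integer $0$, reflection around $0$ yields $\widetilde{w}(I)=w([0,\alpha))+w([0,\beta))$ (analogously for $\widetilde{\sigma}$); and if $I=[1-\alpha,1+\beta)$ crosses the odd integer $1$, reflection around $1$ yields $\widetilde{w}(I)=w([1-\alpha,1))+w([1-\beta,1))$, both contained in $[0,1)$ and \emph{avoiding} the singularity at $0$.

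For $|I|>1$ I would enclose $I$ in the smallest interval $I'=[2m,2m')$ with even-integer endpoints; then $|I'|\leq|I|+4\leq 5|I|$, and the periodicity identity gives $\langle\widetilde{w}\rangle_{I'}=\langle w\rangle_{[0,1)}$ and similarly for $\widetilde{\sigma}$, so
\begin{equation*}
\langle\widetilde{w}\rangle_I\langle\widetilde{\sigma}\rangle_I^{p-1}\leq 5^p\langle w\rangle_{[0,1)}\langle\sigma\rangle_{[0,1)}^{p-1}\leq 5^p[w,\sigma]_{\ci{A_{p}}}\lesssim_p 1
\end{equation*}
by hypothesis (i).

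For $|I|\leq 1$, by $2$-periodicity I shift $I$ by an even integer so $I\subseteq[-1,2)$; then $I$ contains at most one integer. If $I$ is contained in one of $[-1,0)$, $[0,1)$ or $[1,2)$, a direct identification or a reflection sends $I$ isometrically to a subinterval of $[0,1)$ on which $\widetilde{w}=w$ and $\widetilde{\sigma}=\sigma$, and (i) concludes. If $I=[1-\alpha,1+\beta)$ crosses $1$, taking WLOG $\alpha\geq\beta$ we get $[1-\beta,1)\subseteq[1-\alpha,1)$, hence $\widetilde{w}(I)\leq 2w([1-\alpha,1))$ and $\widetilde{\sigma}(I)\leq 2\sigma([1-\alpha,1))$; since $|I|\geq\alpha=|[1-\alpha,1)|$, one obtains $\langle\widetilde{w}\rangle_I\langle\widetilde{\sigma}\rangle_I^{p-1}\leq 2^p\langle w\rangle_{[1-\alpha,1)}\langle\sigma\rangle_{[1-\alpha,1)}^{p-1}\lesssim_p 1$, again by (i). (Hypothesis (iii) guarantees that $[1-\alpha,1)$ is indeed in the regular part of $[0,1)$, which is reassuring although not strictly needed here.)

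The only delicate case is $I=[-\alpha,\beta)$ crossing the even integer $0$, which sits on top of the singularity. Here the reflection identity expands $\widetilde{w}(I)\widetilde{\sigma}(I)^{p-1}$ (using $(x+y)^{p-1}\lesssim_p x^{p-1}+y^{p-1}$ or its reverse form) into at most four terms of the form $w([0,a))\sigma([0,b))^{p-1}$ with $a,b\in\{\alpha,\beta\}\subset(0,1]$. Hypothesis (ii) is stated for dyadic $I_k=[0,2^{-k})$, but it extends to all $a\in(0,1]$: pick $k$ with $[0,a)\subseteq I_k$ and $|I_k|\leq 2a$ (analogously for $b$), so that $w([0,a))\sigma([0,b))^{p-1}\leq w(I_k)\sigma(I_l)^{p-1}\lesssim_p(|I_k|+|I_l|)^p\leq 2^p(a+b)^p\leq 2^p|I|^p$. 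Summing the four terms gives $\langle\widetilde{w}\rangle_I\langle\widetilde{\sigma}\rangle_I^{p-1}\lesssim_p 1$. The main obstacle is organizing the case analysis cleanly and recognizing that (ii) is needed specifically for the one case where $I$ straddles the singularity of both weights: the ``cross'' terms $w([0,\alpha))\sigma([0,\beta))^{p-1}$ with $\alpha\neq\beta$ are precisely what hypothesis (i) by itself cannot control.
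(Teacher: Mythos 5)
Your proof is correct, and its overall skeleton coincides with the paper's: both arguments reduce everything to the hypotheses on $[0,1)$ via periodicity and reflection, both observe that hypothesis (ii) self-improves from the dyadic intervals $I_k$ to all scales $a,b\in(0,1]$ (this is exactly the paper's inequality (2.1)), and both treat an interval straddling an even integer --- the singular point --- by the reflection identity combined with this extended form of (ii). The genuine differences are in the remaining cases. For long intervals you enclose $I$ in an even-endpoint period interval and use $2$-periodicity with the loss $|I'|\le 5|I|$, while the paper compares with arbitrary integer-endpoint intervals; this is cosmetic. More interestingly, for a short interval crossing an odd integer the paper localizes near the crossing using the threshold $x_0$ from hypothesis (iii) and pointwise bounds on $w,\sigma$ near $1$, whereas you fold the two halves of $I=[1-\alpha,1+\beta)$ onto the nested subintervals $[1-\beta,1)\subseteq[1-\alpha,1)$ of $[0,1)$ and invoke only hypothesis (i). This is legitimate: since $\widetilde w$ and $\widetilde\sigma$ reflect in the same way, the masses $\widetilde w(I),\widetilde\sigma(I)$ are symmetric in $\alpha,\beta$, so the reduction to $\alpha\ge\beta$ is allowed, and $|I|\ge\alpha$ gives the comparison of averages. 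Your route avoids the $x_0$-based subdivision entirely and, as you note, makes hypothesis (iii) superfluous for the conclusion, whereas in the paper (iii) is precisely what handles this case; that is a modest but real simplification. Two harmless slips: in the even-crossing case the diagonal terms only give $(a+b)^p\le 2^p|I|^p$ rather than $(a+b)^p\le|I|^p$ (since $a+b$ can equal $2\max(\alpha,\beta)>\alpha+\beta$), which changes nothing, and the symmetry justification for the WLOG deserves the one sentence written above.
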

\begin{proof}
First of all, note that for all $a,b\in(0,1]$, there exist natural numbers $k,l$ with $\frac{1}{2^{k+1}}<a\leq\frac{1}{2^{k}}$ and $\frac{1}{2^{l+1}}<b\leq\frac{1}{2^{l}}$, therefore by assumption we have
\begin{align}
\frac{w([0,a))\sigma([0,b))^{p-1}}{(b+a)^{p}}&\lesssim_{p}
\frac{w(I_{k})\sigma(I_{l})^{p-1}}{(2^{-k}+2^{-l})^{p}}\lesssim_{p}1.
\end{align}
We next notice that for all $k,m\in\mathbb{Z}$ with $k<m$, there holds
\begin{equation*}
\langle \widetilde{w}\rangle_{[k,m)}\langle\widetilde{\sigma}\rangle_{[k,m)}^{p-1}=
\frac{(m-k+1)w([0,1))(m-k+1)^{p-1}\sigma([0,1))^{p-1}}{(m-k+1)^{p}}\lesssim_{p}1.
\end{equation*}
Let now $a,b\in\mathbb{R}$ with $a<b$ be arbitrary. If $[a,b)\subseteq [k,k+1)$ for some $k\in\mathbb{Z}$, then it is clear that
\begin{equation*}
\langle \widetilde{w}\rangle_{[a,b)}\langle\widetilde{\sigma}\rangle_{[a,b)}^{p-1}\leq[w,\sigma]_{\ci{A_{p}}}\lesssim_{p}1.
\end{equation*}
Assume now that $[a,b)\nsubseteq[k,k+1)$, for all $k\in\mathbb{Z}$. There exist then $k,m\in\mathbb{Z}$ with $k\leq a<k+1\leq m\leq b<m+1$. If $k+1<m$, then $m+1-k\geq 3$, therefore
\begin{equation*}
b-a\geq m-(k+1)=m+1-k-2\geq \frac{m+1-k}{4},
\end{equation*}
thus since $[a,b)\subseteq[k,m+1)$ we obtain
\begin{equation*}
\langle \widetilde{w}\rangle_{[a,b)}\langle\widetilde{\sigma}\rangle_{[a,b)}^{p-1}\lesssim_{p}\langle \widetilde{w}\rangle_{[k,m+1)}\langle\widetilde{\sigma}\rangle_{[k,m+1)}^{p-1}\lesssim_{p}1.
\end{equation*}
Assume now that $m=k+1$. Then $b>k+1$. If $a\leq k+x_0$ or $b\geq k+1+x_0$, then $b-a\geq\min(x_0,1-x_0)=1-x_0$, therefore since $[a,b)\subseteq[k,k+2)$ we obtain
\begin{equation*}
\langle \widetilde{w}\rangle_{[a,b)}\langle\widetilde{\sigma}\rangle_{[a,b)}^{p-1}\lesssim_{p}\langle \widetilde{w}\rangle_{[k,k+2)}\langle\widetilde{\sigma}\rangle_{[k,k+2)}^{p-1}\lesssim_{p}1.
\end{equation*}
Assume now that $a>k+x_0$ and that $b<k+1+x_0$. We distinguish the following cases:

(A) The integer $k+1$ is odd. Then, we have $\widetilde{w}(x)=w(x-k)\lesssim_{p}1$, for all $x\in[a,b)$ and $\widetilde{\sigma}(x)=\sigma(k+2-x)\lesssim_{p}1$, for all $x\in[a,b)$, therefore $\langle \widetilde{w}\rangle_{[a,b)}\langle\widetilde{\sigma}\rangle_{[a,b)}^{p-1}\lesssim_{p}1$.

(B) The integer $k+1$ is even. Set $a'=k+1-a$ and $b'=b-(k+1)$. Then, by (2.1) we have
\begin{align*}
\langle \widetilde{w}\rangle_{[a,b)}\langle\widetilde{\sigma}\rangle_{[a,b)}^{p-1}
&\lesssim_{p}
\frac{(w([0,a'))+w([0,b')))(\sigma([0,a'))^{p-1}+\sigma([0,b'))^{p-1})}{(a'+b')^{p}}\lesssim_{p}1.
\end{align*}
This concludes the proof.
\newline
\end{proof}

\section{The exponent of $[\sigma]_{\ci{A_{\infty}}}$}

In this section we prove part (i) and the first half of part (ii) of \hyperref[expprop]{Proposition \ref*{expprop}}. Let $p\in(1,\infty)$. As mentioned in the introduction, we will use the family of examples introduced by Lerner in \cite{Lerner} Section 5., (5.5). Let $\beta\in\left(\frac{1}{2},1\right)$. Consider the functions $w,\sigma,f$ on $\mathbb{R}$ given by
\begin{equation*}
\sigma(x)=|x|^{-\beta},\;w(x)=|x|^{\beta(p-1)},\;f(x)=1_{[0,1]}(x),\;\forall x\in\mathbb{R}\setminus\lbrace0\rbrace
\end{equation*}
and $f(0)=\sigma(0)=w(0)=1$. 

By \hyperref[prelest]{Lemma \ref*{prelest}} we have $[w,\sigma]_{\ci{A_{p}}}\sim_{p}(1-\beta)^{1-p}$, $[w]_{\ci{A_{\infty}}}\lesssim_{p}1$ and $[\sigma]_{\ci{A_{\infty}}}\leq A(1-\beta)^{-1}$, for some absolute constant $A>0$. We also have
\begin{eqnarray*}
||f||_{L^{p}(\sigma)}^{p}=\int_{[0,1)}x^{-\beta}dx=(1-\beta)^{-1}.
\end{eqnarray*}

\begin{lemma}
There holds $||S(\sigma f)||_{\ci{L^{p}(w)}}\gtrsim_{p}(1-\beta)^{-1-\frac{1}{p}}$.
\end{lemma}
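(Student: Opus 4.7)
I plan to exploit the singularity of $\sigma f(x)=x^{-\beta}1_{(0,1)}(x)$ near the origin by picking a chain of dyadic intervals shrinking to $0$. For $k\geq 0$ set $I_k:=[0,2^{-k})$ and $J_k:=[2^{-k-1},2^{-k})$, so that $J_k$ is the right dyadic child of $I_k$. Direct integration yields $\langle\sigma f\rangle_{I_k}=2^{k\beta}/(1-\beta)$ and $\langle\sigma f\rangle_{J_k}=2\cdot 2^{k\beta}(1-2^{-(1-\beta)})/(1-\beta)$, so on $J_k$
$$|\Delta_{I_k}(\sigma f)|=\bigl|\langle\sigma f\rangle_{J_k}-\langle\sigma f\rangle_{I_k}\bigr|=\frac{2^{k\beta}(2^\beta-1)}{1-\beta}.$$
Since $\beta\in(1/2,1)$, the factor $2^\beta-1$ is bounded between two positive absolute constants, so discarding all other (nonnegative) terms in the sum defining $S(\sigma f)^2$ gives
$$S(\sigma f)(x)^2\geq \bigl(\Delta_{I_k}(\sigma f)(x)\bigr)^2\gtrsim\frac{2^{2k\beta}}{(1-\beta)^2},\qquad x\in J_k.$$

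Next I integrate this lower bound against $w$. On $J_k$, $w(x)=x^{\beta(p-1)}\sim 2^{-k\beta(p-1)}$ uniformly and $|J_k|=2^{-k-1}$, so
$$\int_{J_k} S(\sigma f)^p\, w\, dx \gtrsim_p \frac{2^{kp\beta}}{(1-\beta)^p}\cdot 2^{-k\beta(p-1)}\cdot 2^{-k} = \frac{2^{-k(1-\beta)}}{(1-\beta)^p}.$$
Summing over $k\geq 0$ and using the elementary inequality $1-2^{-(1-\beta)}\leq(1-\beta)\ln 2$ to estimate the geometric series $\sum_{k\geq 0} 2^{-k(1-\beta)}\gtrsim (1-\beta)^{-1}$, I obtain $\|S(\sigma f)\|_{L^p(w)}^p\gtrsim_p (1-\beta)^{-(p+1)}$, and taking $p$-th roots delivers the claim.

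The strategy has no serious obstacle; its one subtlety is to choose the intervals $J_k$ so that the martingale difference at scale $2^{-k}$ grows like $2^{k\beta}/(1-\beta)$ while the $w$-measure of $J_k$ decays only as $2^{-k(1+\beta(p-1))}$. The resulting exponents conspire so that after raising to the $p$-th power the imbalance produces a factor $2^{-k(1-\beta)}$; summing this geometric series in $k$ furnishes the additional $(1-\beta)^{-1}$ beyond the pointwise bound $(1-\beta)^{-p}$, which is precisely what yields the target exponent $-(p+1)$.
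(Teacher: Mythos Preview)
Your argument is correct and essentially identical to the paper's: both compute $|\Delta_{I_k}(\sigma f)|\sim(1-\beta)^{-1}2^{\beta k}$, integrate the resulting lower bound on $S(\sigma f)^p$ against $w$ over the dyadic annuli, and sum the geometric series $\sum_k 2^{-(1-\beta)k}\sim(1-\beta)^{-1}$. The only cosmetic difference is that the paper, for $x$ in the annulus at scale $2^{-n}$, accumulates all the differences $\Delta_{I_0},\dots,\Delta_{I_{n-1}}$, whereas you keep just the single dominant term $\Delta_{I_k}$ on its own right child $J_k$; since $\sum_{k<n}2^{2\beta k}\sim 2^{2\beta n}$ for $\beta>\tfrac12$, the two lower bounds coincide up to absolute constants and your version is slightly more economical.
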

\begin{proof}
Set $I_{n}=\left[0,\frac{1}{2^{n}}\right),\;n=0,1,2,\ldots$ and $J_{n}=I_{n-1}\setminus I_{n}=\left[\frac{1}{2^{n}},\frac{1}{2^{n-1}}\right),\;n=1,2,\ldots$. Let $k\geq0$ be arbitrary. We notice that for all $x\in I_{k+1}$, there holds
\begin{align*}
\Delta_{\ci{I_{k}}}(\sigma f)(x)&=\langle \sigma f\rangle_{\ci{I_{k+1}}}-\langle\sigma f\rangle_{\ci{I_{k}}}=2^{k+1}\int_{\left(0,\frac{1}{2^{k+1}}\right)}x^{-\beta}dx-2^{k}\int_{\left(0,\frac{1}{2^{k}}\right)}x^{-\beta}dx\\
&=(1-\beta)^{-1}(2^{\beta(k+1)}-2^{\beta k})\sim(1-\beta)^{-1}2^{\beta k}.
\end{align*}
In particular, for all $k\geq 0$, for all $n>k$ and for all $x\in I_{n}$, we have $|\Delta_{\ci{I_{k}}}(\sigma f)(x)|\sim(1-\beta)^{-1}2^{\beta k}$. It follows that for all $n\geq1$, for all $x\in J_{n}$, there holds
\begin{align*}
S(\sigma f)(x)^{p}&\geq\left(\sum_{k=0}^{n-1}(\Delta_{\ci{I_{k}}}(\sigma f)(x))^2\right)^{\frac{p}{2}}\sim_{p}(1-\beta)^{-p}\left(\sum_{k=0}^{n-1}2^{2\beta k}\right)^{\frac{p}{2}}\\
&=(1-\beta)^{-p}\frac{(2^{2\beta(n+1)}-1)^{\frac{p}{2}}}{(2^{2\beta}-1)^{\frac{p}{2}}}
\gtrsim (1-\beta)^{-p}2^{\beta np}.
\end{align*}
Moreover, for all $n\geq 1$, we have $w(J_{n})\sim 2^{(-1-\beta(p-1))n}$. It follows that
\begin{equation*}
||S(\sigma f)||_{\ci{L^{p}(w)}}^{p}\gtrsim_{p}(1-\beta)^{-p}\sum_{n=1}^{\infty}2^{(\beta-1)n}\sim
(1-\beta)^{-p}\frac{1}{1-2^{\beta-1}}\sim(1-\beta)^{-p-1},
\end{equation*}
since $\lim_{x\rightarrow 1^{-}}\frac{1-x}{1-2^{x-1}}=\frac{1}{\log 2}\in(0,\infty)$.
\end{proof}

The above estimates yield
\begin{equation*}
(1-\beta)^{-1-\frac{1}{p}}\lesssim_{p}(1-\beta)^{\frac{1}{p}-1}\psi(A(1-\beta)^{-1})(1-\beta)^{-\frac{1}{p}},
\end{equation*}
i.e.
\begin{equation*}
\psi(A(1-\beta)^{-1})\gtrsim_{p} (1-\beta)^{-\frac{1}{p}}.
\end{equation*}
Since $\beta\in\left(\frac{1}{2},1\right)$ was arbitrary, we obtain $\psi(x)\gtrsim_{p}x^{\frac{1}{p}}$, for all $x>2A$.
\newline

\section{The exponent of $[w]_{\ci{A_{\infty}}}$}\label{expw}

In this section we prove the second half of part (ii) of \hyperref[expprop]{Proposition \ref*{expprop}}. As mentioned in the introduction, we will use a family of examples reminiscent of a counterexample for two-weight estimates for square functions in non-homogeneous settings due to Lai--Treil in \cite{Lai-Treil} Section 3. More precisely, this family of examples is reminiscent of the counterexample in section 5.2., which follows in turn very closely the strategy of the construction in \cite{Lai-Treil}, Section 3. 

Let $p\in(2,\infty)$. Consider the functions $f,\sigma,w$ on $\mathbb{R}$ given by
\begin{equation*}
f(x)=(-1)^{\lfloor-\log_2 |x|\rfloor}|x|^{-\frac{\beta}{p-1}}1_{(0,1)}(x),\;\;\sigma(x)=|x|^{\frac{\beta}{p-1}},\;\;w(x)=|x|^{-\beta},\;\forall x\in\mathbb{R}\setminus\lbrace0\rbrace
\end{equation*} 
and $f(0)=\sigma(0)=w(0)=1$.

By \hyperref[prelest]{Lemma \ref*{prelest}} we have $[w,\sigma]_{\ci{A_{p}}}\sim_{p}(1-\beta)^{-1}$, $[\sigma]_{\ci{A_{\infty}}}\lesssim_{p}1$ and $[w]_{\ci{A_{\infty}}}\leq B(1-\beta)^{-1}$, for some absolute constant $B>0$. We also have
\begin{eqnarray*}
||f||_{\ci{L^{p}(\sigma)}}^{p}=\int_{[0,1)}x^{-\beta}dx=(1-\beta)^{-1}.
\end{eqnarray*}

\begin{lemma}
There holds $||S(\sigma f)||_{\ci{L^{p}(w)}}\gtrsim_{p}(1-\beta)^{-\frac{1}{2}-\frac{1}{p}}$.
\end{lemma}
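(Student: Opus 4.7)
The plan is to exploit the fact that $\sigma f$ has constant modulus $1$ on each dyadic ring $J_n=[2^{-n},2^{-n+1})$, with alternating sign $(-1)^{n-1}$, so that the martingale differences $\Delta_{I_k}(\sigma f)$ are exactly the oscillations of a telescoped alternating series. Concretely, I would first record that if $x\in J_n$ then $\lfloor -\log_2 x\rfloor =n-1$, so $\sigma(x)f(x)=(-1)^{n-1}$ for $x\in J_n$ (the same $I_n$, $J_n$ notation as in the previous section). This is the structural ingredient that replaces the monotone computation from Section~3.

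Next I would compute the dyadic averages $\langle \sigma f\rangle_{I_k}$ by summing a geometric series: using $I_k=\bigsqcup_{n\geq k+1} J_n$ and $|J_n|=2^{-n}$,
\begin{equation*}
\langle \sigma f\rangle_{\ci{I_k}}
=2^{k}\sum_{n=k+1}^{\infty}(-1)^{n-1}2^{-n}
=\frac{(-1)^{k}}{3},
\end{equation*}
and likewise $\langle \sigma f\rangle_{\ci{J_{k+1}}}=(-1)^{k}$. Plugging these into the definition of $\Delta_{\ci{I_k}}(\sigma f)$, I expect to obtain the crucial clean identity
\begin{equation*}
|\Delta_{\ci{I_k}}(\sigma f)(x)|=\tfrac{2}{3}\,1_{\ci{I_k}}(x),
\end{equation*}
i.e.\ the size of the martingale difference is constant of order $1$ on all of $I_k$ (independent of $k$). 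This is the key step and the whole strength of the example: every $I_k$ with $k\leq n-1$ contributes the same fixed amount to $S(\sigma f)(x)$ when $x\in J_n$, so $S(\sigma f)(x)\gtrsim \sqrt{n}$ for $x\in J_n$. Note this is in sharp contrast with the monotone example in Section~3, where the contributions grew geometrically; here they are constant, but there are many of them, giving the $\sqrt{n}$ blow-up that is responsible for the $[w]_{A_\infty}^{1/2-1/p}$ behaviour.

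Once the pointwise lower bound $S(\sigma f)(x)\gtrsim \sqrt{n}$ on $J_n$ is in place, I would integrate against $w$:
\begin{equation*}
\|S(\sigma f)\|_{\ci{L^p(w)}}^{p}
\gtrsim_{p}\sum_{n=1}^{\infty} n^{p/2}\,w(J_{n}),
\end{equation*}
and compute $w(J_n)=\frac{2^{1-\beta}-1}{1-\beta}\,2^{-(1-\beta)n}\sim (\log 2)\,2^{-(1-\beta)n}$ as $\beta\to 1^{-}$. The remaining step is the standard asymptotic
\begin{equation*}
\sum_{n=1}^{\infty} n^{p/2}\,2^{-(1-\beta)n}\sim_{p}(1-\beta)^{-p/2-1},
\end{equation*}
which follows from an integral comparison (substituting $u=(1-\beta)n\log 2$ and invoking $\Gamma(p/2+1)<\infty$). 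Combining these gives $\|S(\sigma f)\|_{L^{p}(w)}^{p}\gtrsim_{p}(1-\beta)^{-p/2-1}$, i.e.\ the desired bound $\|S(\sigma f)\|_{L^{p}(w)}\gtrsim_{p}(1-\beta)^{-1/2-1/p}$. The only delicate point is tracking the $(1-\beta)$ dependence in the prefactor of $w(J_n)$; the rest is just algebra and a geometric series.
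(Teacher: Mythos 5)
Your proposal is correct and follows essentially the same route as the paper: the same computation $\langle \sigma f\rangle_{I_k}=(-1)^k/3$ via the geometric series, the resulting constant-size martingale differences $|\Delta_{I_k}(\sigma f)|=2/3$, the pointwise bound $S(\sigma f)\gtrsim \sqrt{n}$ on $J_n$, the estimate $w(J_n)\sim 2^{(\beta-1)n}$, and the sum-to-integral comparison giving $\sum_n n^{p/2}2^{(\beta-1)n}\gtrsim_p (1-\beta)^{-p/2-1}$. Your remark that the modulus is constant on all of $I_k$ (both children) is a slightly cleaner statement of the key identity the paper records only on the child $I_{k+1}$, but the argument is the same.
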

\begin{proof}
Set $I_{n}=\left[0,\frac{1}{2^{n}}\right),\;n=0,1,2,\ldots$ and $J_{n}=I_{n-1}\setminus I_{n}=\left[\frac{1}{2^{n}},\frac{1}{2^{n-1}}\right),\;n=1,2,\ldots$. We notice that 
\begin{align*}
\langle f\sigma\rangle_{I_{k}}=\frac{1}{|I_{k}|}\int_{I_{k}}f\sigma dx=2^{k}\sum_{n=k+1}^{\infty}\int_{J_{n}}(-1)^{n-1}dx=-2^{k}\sum_{n=k+1}^{\infty}\left(-\frac{1}{2}\right)^{n}dx=\frac{(-1)^{k}}{3},
\end{align*}
for all $k\geq0$. It follows that for all $k\geq 0$, for all $x\in I_{k+1}$, there holds
\begin{align*}
\Delta_{\ci{I_{k}}}(\sigma f)(x)&=\langle \sigma f\rangle_{\ci{I_{k+1}}}-\langle\sigma f\rangle_{\ci{I_{k}}}=\frac{2(-1)^{k+1}}{3},
\end{align*}
therefore $|\Delta_{\ci{I_{k}}}(\sigma f)(x)|\sim1$. In particular, for all $k\geq 0$, for all $n>k$ and for all $x\in I_{n}$, we have $|\Delta_{\ci{I_{k}}}(\sigma f)(x)|\sim1$. It follows that for all $n\geq1$, for all $x\in J_{n}$, there holds
\begin{eqnarray*}
S(\sigma f)(x)^{p}\geq\left(\sum_{k=0}^{n-1}(\Delta_{\ci{I_{k}}}(\sigma f)(x))^2\right)^{\frac{p}{2}}\sim_{p}\left(\sum_{k=2}^{n+1}1\right)^{\frac{p}{2}}= n^{\frac{p}{2}}.
\end{eqnarray*}
 Moreover, for all $n\geq 1$, we have $w(J_{n})\sim 2^{(\beta-1)n}$. It follows that
\begin{equation*}
||S(\sigma f)||_{\ci{L^{p}(w)}}^{p}\gtrsim_{p}\sum_{n=1}^{\infty}2^{(\beta-1)n}n^{\frac{p}{2}}.
\end{equation*}
It is now easily seen that for all $n\geq 1$ and for $x\in[n,n+1]$, there holds
\begin{equation*}
\frac{1}{2}2^{(\beta-1)n}n^{\frac{p}{2}}\leq2^{(\beta-1)x}x^{\frac{p}{2}}\leq 2^{\frac{p}{2}}2^{(\beta-1)n}n^{\frac{p}{2}}.
\end{equation*}
It follows that
\begin{align*}
\sum_{n=1}^{\infty}2^{(\beta-1)n}n^{\frac{p}{2}}&\sim_{p} \int_{(1,\infty)}2^{(\beta-1)x}x^{\frac{p}{2}}dx
=(1-\beta)^{-\frac{p}{2}-1}\int_{(1-\beta,\infty)}2^{-y}y^{\frac{p}{2}}dy\\
&\geq (1-\beta)^{-\frac{p}{2}-1}\int_{(1,\infty)}2^{-y}y^{\frac{p}{2}}dy \sim_{p}
(1-\beta)^{-\frac{p}{2}-1}.
\end{align*}
It follows that $||S(\sigma f)||_{\ci{L^{p}(w)}}^{p}\gtrsim_{p}(1-\beta)^{-\frac{p}{2}-1}$.
\end{proof}

Note that in particular $||S(\cdot\sigma)||_{\ci{L^{p}(\sigma)\rightarrow L^{p}(w)}}\geq\frac{||S(\sigma f)||_{\ci{L^{p}(w)}}}{||f||_{\ci{L^{p}(\sigma)}}}\gtrsim_{p}(1-\beta)^{-\frac{1}{2}}$ (and this remains true even if $S$ is just the square function over the unit interval $[0,1)$, since we used only dyadic intervals contained in $[0,1)$ in the proof of Lemma 4.1. above and $f$ is supported in $[0,1]$).

The above estimates yield
\begin{equation*}
(1-\beta)^{-\frac{1}{2}-\frac{1}{p}}\lesssim_{p}(1-\beta)^{-\frac{1}{p}}\phi(B(1-\beta)^{-1})(1-\beta)^{-\frac{1}{p}},
\end{equation*}
i.e.
\begin{equation*}
\phi(B(1-\beta)^{-1})\gtrsim_{p} (1-\beta)^{-\frac{1}{2}+\frac{1}{p}}.
\end{equation*}
Since $\beta\in(0,1)$ was arbitrary, we obtain $\phi(x)\gtrsim_{p} x^{\frac{1}{2}-\frac{1}{p}}$, for all $x>B$.
\newline

\begin{remark}
It is worth noting that another way to prove this estimate passes through the family of examples constructed by Lacey--Scurry in \cite{Lacey-Scurry} Section 3. and further refined by Hyt$\ddot{\text{o}}$nen--Li in \cite{Hyt-Li} Proposition 5.2., in exactly the same way that Lerner's family of examples yielded the estimate for the exponent of $[\sigma]_{A_{\infty}}$. However, the construction by Lacey--Scurry and Hyt$\ddot{\text{o}}$nen--Li views the square function as a vector-valued linear operator and then makes use of a  duality argument and Khinchine's inequalities, whereas in our construction above we worked directly with the square function (in particular, we avoided Khinchine's inequalities by making an appropriate choice of signs).
\newline

\end{remark}

\section{Counterexamples}

\subsection{Direct sum of singularities}

In this section, we provide one counterexample proving \hyperref[countprop]{Proposition \ref*{countprop}}. As mentioned in the introduction, this counterexample can be viewed as a modified version of a direct sum construction over the family of examples in \hyperref[expw]{Section \ref*{expw}}. Here, by direct sum construction one understands that an interval is divided into subintervals $I_1,I_2,\ldots$, and then each $I_{k}$ is equipped with an (appropriately shifted and rescaled) example from Section 4. for some value $\beta_{k}$ of the parameter $\beta$, such that $\lim_{k\rightarrow\infty}\beta_{k}=1$. This direct sum construction is possible whenever one has uniform control over the joint $A_p$-characterictic of the two weights, thus one will have to slightly modify the family of examples in \hyperref[expw]{Section \ref*{expw}} (see (5.1) below). I am grateful to Professor S. Treil for suggesting this direct sum construction. In general, this direct sum construction will only ensure that the dyadic joint $A_{p}$-characteristic of the two weights is finite. In order to make sure that the full joint $A_{p}$-characteristic of the constructed weights is finite, we will use a modified version of the direct sum construction.

Let $p\in(2,\infty)$. Set $\beta_{k}=1-\frac{1}{k}$. Let $k\geq 1$ be arbitrary. Consider the weights $w_{k},\sigma_{k}$ on $[0,1)$ given by
\begin{equation}
w_{k}(x)=(1-\beta_{k})x^{-\beta_k},\;\sigma_{k}=x^{\frac{\beta_{k}}{p-1}},\;\forall x\in(0,1)
\end{equation}
and $w_{k}(0)=\sigma_k(0)=1$. Then $w_{k}([0,1))\sim_{p}1$ and $\sigma_{k}([0,1))\sim_{p}1$. Moreover, by \hyperref[prelest]{Lemma \ref*{prelest}} we have $[w_k,\sigma_k]_{\ci{A_{p}}}\lesssim_{p}1$ and $[\sigma_{k}]_{\ci{A_{q'}}}\lesssim_{p}1$, where $q=\frac{p+1}{2}$. I am grateful to Professor S. Treil for suggesting this modified version of the family of examples of \hyperref[expw]{Section \ref*{expw}}.

We can write $Q_0\setminus\lbrace0\rbrace=\bigcup_{k=1}^{\infty}J_{k}$, where
\begin{equation*}
J_{k}=\left[\frac{1}{2^{k}},\frac{1}{2^{k-1}}\right),\;k=1,2,\ldots.
\end{equation*}
Set also $I_{k}=\left[0,\frac{1}{2^{k}}\right)=\bigcup_{n=k+1}^{\infty}J_{n}$, for $k=0,1,2,\ldots$. Now, for all $k=1,2,\ldots$, one can consider the weights $\widetilde{w}_{k},\widetilde{\sigma}_{k}$ on $J_k$ given by
\begin{equation*}
\widetilde{w}_{k}(x)=
\begin{cases}
w_{k}(2^{k}x-1),\text{ if }k\text{ is odd}\\
w_{k-1}(2-2^{k}x),\text{ if }k\text{ is even}
\end{cases}
,\forall x\in \text{Int}(J_{k}),
\end{equation*}
\begin{equation*}
\widetilde{\sigma}_{k}(x)=
\begin{cases}
\sigma_{k}(2^{k}x-1),\text{ if }k\text{ is odd}\\
\sigma_{k-1}(2-2^{k}x),\text{ if }k\text{ is even}
\end{cases}
,\forall x\in \text{Int}(J_{k}),
\end{equation*}
and $\widetilde{w}_{k}\left(\frac{1}{2^{k}}\right)=\widetilde{\sigma}_{k}\left(\frac{1}{2^{k}}\right)=1$. Consider the weights $\widetilde{w},\widetilde{\sigma}$ on $Q_0$ given by
\begin{equation*}
\widetilde{w}(x)=\widetilde{w}_{k}(x),\;\;\;\widetilde{\sigma}(x)=\widetilde{\sigma}_k(x),\;\forall x\in J_k,\;\forall k=1,2,\ldots,
\end{equation*}
and $\widetilde{w}(0)=\widetilde{\sigma}(0)=1$. Following the same steps as in the proof of \hyperref[glob]{Lemma \ref*{glob}}, essentially with $\mathbb{Z}$ replaced by $\lbrace0\rbrace\cup\left\lbrace\frac{1}{2^{k}}:\;k\in\mathbb{N}\right\rbrace$, it can be verified that $[\widetilde{w},\widetilde\sigma]_{\ci{A_{p}}}<\infty$ and $[\widetilde\sigma]_{\ci{A_{q'}}}<\infty$.

Let now $S_k$ be the square function over $J_k$, for all $k=1,2,\ldots$. Then, the computations in section 4. coupled with translation and rescaling invariance show that
\begin{equation*}
||S_{k}(\cdot\widetilde{\sigma}_{k})||_{\ci{L^{p}(\widetilde{\sigma}_k)\rightarrow L^{p}(\widetilde{w}_{k})}}\gtrsim_{p}(1-\beta_{k})^{\frac{1}{p}-\frac{1}{2}},
\end{equation*}
for all odd positive integers $k$. In particular, for all odd positive integers $k$, there exists $f_{k}\in L^{p}(\widetilde{\sigma}_{k})$ with $||f_{k}||_{\ci{L^{p}(\widetilde{\sigma}_{k})}}=\frac{1}{k^{\frac{1}{2}}}$ and $||S_{k}(f_{k}\widetilde{\sigma}_{k})||_{\ci{L^{p}(\widetilde{w}_{k})}}\gtrsim_{p}\frac{(1-\beta_{k})^{\frac{1}{p}-\frac{1}{2}}}{k^{\frac{1}{2}}}=\frac{1}{k^{\frac{1}{p}}}$. Consider the function $f$ on $[0,1)$ given by
\begin{equation*}
f(x)=f_{k}(x),\;\forall x\in J_{k},
\end{equation*}
for all odd positive integers $k$, and $f(x)=0$, for all $x\in[0,1)\setminus\left(\bigcup_{k=0}^{\infty}J_{2k+1}\right)$. Then, we have
\begin{equation*}
||f||_{\ci{L^{p}(\widetilde{\sigma})}}^{p}=\sum_{k=0}^{\infty}||f_{2k+1}||_{\ci{L^{p}(\widetilde{\sigma}_{2k+1})}}^{p}=\sum_{k=0}^{\infty}\frac{1}{(2k+1)^{\frac{p}{2}}}<\infty,
\end{equation*}
therefore $f\in L^{p}(\widetilde{\sigma})$, and
\begin{equation*}
||S(f\widetilde{\sigma})||_{\ci{L^{p}(\widetilde{w})}}^{p}\geq\sum_{k=0}^{\infty}||S_{2k+1}(f_{2k+1}\widetilde{\sigma}_{2k+1})||_{\ci{L^{p}(\widetilde{w}_{2k+1})}}^{p}\gtrsim_{p}\sum_{k=0}^{\infty}\frac{1}{2k+1}=\infty,
\end{equation*}
therefore $S(f\widetilde{\sigma})\notin L^{p}(w)$ (here $S$ denotes the square function over $[0,1)$), concluding the proof.

Note that by \hyperref[glob]{Lemma \ref*{glob}} (with $x_0=\frac{3}{4}$) we have that there exist weights $\widetilde{w}',\widetilde{\sigma}'$ on $\mathbb{R}$ with $\widetilde{w}'|_{[0,1)}=\widetilde{w}$ and $\widetilde{\sigma}'|_{[0,1)}=\widetilde\sigma$, such that $[\widetilde{w}',\widetilde{\sigma}']_{\ci{A_{p}}}\lesssim_{p}1$ and $[\widetilde{\sigma}']_{\ci{A_{q'}}}\lesssim_{p}1$. Therefore, the above counterexample can be viewed as a counterexample on $\mathbb{R}$ (the square function over $\mathbb{R}$ clearly dominates the one over $[0,1)$).
\newline

\begin{remark}
Assume that we repeat the above construction, but with $(\beta_{k})_{k=1}^{\infty}$ being an arbitrary strictly increasing sequence of elements of $[0,1)$ with $\lim_{k\rightarrow\infty}\beta_{k}=1$. Suppose that $S(f\tilde{\sigma})\in L^{p}(\tilde{w})$, for all $f\in L^{p}(\tilde{\sigma})$ (here $S$ denotes the square function over $[0,1)$). Then, since $\frac{1}{p}-\frac{1}{2}<0$ and
\begin{equation*}
||S(\cdot\widetilde\sigma)||_{\ci{L^{p}(\widetilde\sigma)\rightarrow L^{p}(\widetilde w)}}\geq||S_{2k+1}(\cdot\widetilde{\sigma}_{2k+1})||_{\ci{L^{p}(\widetilde{\sigma}_{2k+1})\rightarrow L^{p}(\widetilde{w}_{2k+1})}},\;\forall k=0,1,2,\ldots,
\end{equation*}
we obtain $||S(\cdot\widetilde\sigma)||_{\ci{L^{p}(
\widetilde\sigma)\rightarrow L^{p}(\widetilde w)}}=\infty$. Then, viewing $S(\cdot\widetilde{\sigma})$ as a linear operator from $L^{p}(\widetilde{\sigma})$ into $L^{p}(l^{2}(\mathcal{D}),\widetilde{w})$ (where $\mathcal{D}$ is the family of all dyadic subintervals of $[0,1)$), we obtain a contradiction through an easy application of the closed graph theorem. Therefore, there exists $f\in L^{p}(\tilde{\sigma})$ with $S(f\tilde{\sigma})\notin L^{p}(\tilde{w})$.
\newline

\end{remark}
\subsection{Following the strategy of Lai--Treil \cite{Lai-Treil}}

In this section, we provide a second counterexample proving \hyperref[countprop]{Proposition \ref*{countprop}}. As mentioned in the introduction, the construction follows very closely the strategy of the one in \cite{Lai-Treil} Section 3.

Let $p\in(2,\infty)$. As in \cite{Lai-Treil} Section 3., choose $r\in\left(\frac{1}{p},\frac{1}{2}\right)$. Set $\alpha=2r-1\in(-1,0)$ and consider the functions $f,\sigma,w:[0,1)\rightarrow[0,\infty)$ given by
\begin{equation*}
f(x)=\frac{(-1)^{\lfloor-\log_2 x\rfloor}}{x^{\frac{1}{p-1}}{(1-\log_2 x)^{r}}},\;\;\sigma(x)=x^{\frac{1}{p-1}},\;\;w(x)=\frac{1}{x(1-\log_2 x)^{1-\alpha}},\;\forall x\in(0,1)
\end{equation*} 
and $f(0)=\sigma(0)=w(0)=1$.

First of all, we claim that $[w,\sigma]_{\ci{A_{p}}}<\infty$. Indeed, let $a,b\in[0,1]$ with $a<b$ be arbitrary. We distinguish the following cases:

(A) There holds $a=0$. Then, by direct computation we have
\begin{eqnarray*}
\langle w\rangle_{\ci{[a,b)}}\langle\sigma\rangle_{\ci{[a,b)}}^{p-1}
=-\frac{\log 2}{\alpha}\left(\frac{p-1}{p}\right)^{p-1}(1-\log_{2} b)^{\alpha}\leq -\frac{\log 2}{\alpha}\left(\frac{p-1}{p}\right)^{p-1}<\infty,
\end{eqnarray*}
since $1-\log_{2}b\geq 1$ and $\alpha<0$.

(B) There holds $a>0$ and $a\leq\frac{b}{2}$. Then, we have $[a,b)\subseteq[0,b)$ and $b\leq 2(b-a)$, therefore by case (A) we obtain
\begin{eqnarray*}
\langle w\rangle_{\ci{[a,b)}}\langle\sigma\rangle_{\ci{[a,b)}}^{p-1}\leq
2\langle w\rangle_{\ci{[0,b)}}(2\langle\sigma\rangle_{\ci{[0,b)}})^{p-1}\leq
-\frac{2^{p}\log 2}{\alpha}\left(\frac{p-1}{p}\right)^{p-1}<\infty.
\end{eqnarray*}

(C) There holds $a>0$ and $a>\frac{b}{2}$. It is easy to see that
\begin{equation*}
\sup_{x\in[a,b)}(1-\log_{2}x)=1-\log_{2}a\leq2(1-\log_{2}b)=2\inf_{x\in[a,b)}(1-\log_{2}x),
\end{equation*}
which can be easily seen to imply that there exists some constant $C>1$ (depending only on $\alpha$) such that
\begin{equation*}
\sup_{x\in[a,b)}\frac{1}{x(1-\log_{2}x)^{1-\alpha}}\leq C \inf_{x\in[a,b)}\frac{1}{x(1-\log_{2}x)^{1-\alpha}}.
\end{equation*}
It follows that
\begin{align*}
\langle w\rangle_{\ci{[a,b)}}\langle\sigma\rangle_{\ci{[a,b)}}^{p-1}&\leq\left(\sup_{x\in[a,b)}\frac{1}{x(1-\log_{2}x)^{1-\alpha}}\right)(\sup_{x\in(a,b)}x^{\frac{1}{p-1}})^{p-1}\\
&\leq\left(C\inf_{x\in[a,b)}\frac{1}{x(1-\log_{2}x)^{1-\alpha}}\right)(2^{\frac{1}{p-1}}\inf_{x\in(a,b)}x^{\frac{1}{p-1}})^{p-1}\\
&\leq2C\inf_{x\in[a,b)}\frac{x}{x(1-\log_{2}x)^{1-\alpha}}
=\frac{2C}{(1-\log_{2}a)^{1-\alpha}}\leq 2C,
\end{align*}
since $1-\alpha>0$ and $\log_{2}b\leq 0$.

Similar estimates show that $[\sigma]_{\ci{A_{2}}}<\infty$, therefore in particular $[\sigma]_{\ci{A_{\infty}}}<\infty$.

We also claim that $f\in L^{p}(\sigma)$. Indeed, by direct computation we have
\begin{eqnarray*}
\int_{[0,1)}|f|^{p}\sigma dx=\int_{(0,1)}\frac{1}{x(1-\log_2 x)^{pr}}dx=\frac{\log 2}{pr-1}<\infty,
\end{eqnarray*}
since $pr>1$.

Set now $I_{k}=\left[0,\frac{1}{2^{k}}\right)$, for all $k=0,1,2,\ldots$. Let $k\geq 0$ be arbitrary. We notice that for all $x\in I_{k+1}$, there holds
\begin{align}
-\Delta_{\ci{I_{k}}}(\sigma f)(x)
&=(-2)^{k}\int_{\left(\frac{1}{2^{k+1}},\frac{1}{2^{k}}\right)}\frac{1}{(1-\log_2 x)^{r}}dx-
2^{k}\int_{\left(0,\frac{1}{2^{k+1}}\right)}\frac{(-1)^{\lfloor-\log_{2}x\rfloor}}{(1-\log_2 x)^{r}}dx.
\end{align}
We have
\begin{eqnarray*}
\int_{\left(0,\frac{1}{2^{k+1}}\right)}\frac{(-1)^{\lfloor-\log_{2}x\rfloor}}{(1-\log_2 x)^{r}}dx=\sum_{l=k+1}^{\infty}(-1)^{l}\int_{\left(\frac{1}{2^{l+1}},\frac{1}{2^{l}}\right)}\frac{1}{(1-\log_2 x)^{r}}dx,
\end{eqnarray*}
and for all $l\geq 0$ there holds
\begin{equation}
\frac{1}{2^{l+1}(l+2)^{r}}\leq
\int_{\left(\frac{1}{2^{l+1}},\frac{1}{2^{l}}\right)}\frac{1}{(1-\log_2 x)^{r}}dx\leq\frac{1}{2^{l+1}(l+1)^{r}},
\end{equation}
therefore by the alternating series test we obtain
\begin{equation}
\left|\int_{\left(0,\frac{1}{2^{k+1}}\right)}\frac{(-1)^{\lfloor-\log_{2}x\rfloor}}{(1-\log_2 x)^{r}}dx\right|\leq \int_{\left(\frac{1}{2^{k+2}},\frac{1}{2^{k+1}}\right)}\frac{1}{(1-\log_2 x)^{r}}dx\leq
\frac{1}{2^{k+2}(k+2)^{r}}.
\end{equation}
From (5.2), (5.3) and (5.4) we deduce $|\Delta_{\ci{I_{k}}}(\sigma f)(x)|\geq\frac{1}{4(k+2)^{r}}$, for all $x\in I_{k}$. This estimate is a modification of the first estimate on page 11 in \cite{Lai-Treil}. In particular, for all $k\geq 0$, for all $n>k$ and for all $x\in I_{n}$, we have $|\Delta_{\ci{I_{k}}}(\sigma f)(x)|\geq\frac{1}{4(k+2)^{r}}$. It follows that for all $k\geq1$, for all $x\in I_{k}$, there holds
\begin{equation*}
S(\sigma f)(x)^{p}\geq\left(\sum_{n=0}^{k-1}(\Delta_{\ci{I_{n}}}(\sigma f)(x))^2\right)^{\frac{p}{2}}\geq C_{1}\left(\sum_{n=2}^{k+1}\frac{1}{n^{2r}}\right)^{\frac{p}{2}}=C_1\left(\sum_{n=2}^{k+1}\frac{1}{n^{1+\alpha}}\right)^{\frac{p}{2}},
\end{equation*}
where $C_1=\frac{1}{4^{p}}>0$ (here $S$ denotes the square function over $[0,1)$). This estimate is the same as the second estimate on page 11 in \cite{Lai-Treil}. Moreover, for all $k\geq 1$, we have
\begin{eqnarray*}
w(I_{k})=\int_{\left(0,\frac{1}{2^{k}}\right)}\frac{1}{x(1-\log_2 x)^{1-\alpha}}dx=-\frac{1}{\alpha}(k+1)^{\alpha},
\end{eqnarray*}
and
\begin{equation*}
\left(\sum_{n=2}^{k+1}\frac{1}{n^{1+\alpha}}\right)^{-1}\leq\left(\int_{[2,k+2]}\frac{1}{x^{1+\alpha}dx}\right)^{-1}
-\alpha\frac{2^{\alpha}(k+2)^{\alpha}}{{2^\alpha}-(k+2)^{\alpha}}\leq-\alpha\frac{2^{\alpha}(k+1)^{\alpha}}{2^{\alpha}-3^{\alpha}},
\end{equation*}
since $\alpha<0$, therefore
\begin{equation*}
w(I_{k})\geq C_2\left(\sum_{n=2}^{k+1}\frac{1}{n^{1+\alpha}}\right)^{-1},
\end{equation*}
where $C_2=\frac{2^{\alpha}-3^{\alpha}}{2^{\alpha+1}\alpha^2}>0$. It follows that
\begin{equation*}
\int_{I_{k}}S(\sigma f)(x)^pw(x)dx\geq C_1C_2\left(\sum_{n=2}^{k+1}\frac{1}{n^{1+\alpha}}\right)^{\frac{p}{2}-1},\;\forall k\geq 1,
\end{equation*}
therefore $\lim_{k\rightarrow\infty}\int_{I_{k}}S(\sigma f)(x)^pw(x)dx=\infty$, since $1+\alpha=2r<1$ and $\frac{p}{2}-1>0$. This estimate is a slight modification of the third estimate on page 11 in \cite{Lai-Treil}. It follows that $S(\sigma f)\notin L^{p}(w)$.

Note that by \hyperref[glob]{Lemma \ref*{glob}} (with $x_0=\frac{1}{2}$) we have that there exist weights $\widetilde{w},\widetilde{\sigma}$ on $\mathbb{R}$ with $\widetilde{w}|_{[0,1)}=w$ and $\widetilde{\sigma}|_{[0,1)}=\sigma$, such that $[\widetilde{w},\widetilde{\sigma}]_{\ci{A_{p}}}\lesssim_{p}1$ and $[\widetilde{\sigma}]_{\ci{A_{2}}}\lesssim_{p}1$. Therefore, the above counterexample can be viewed as a counterexample on $\mathbb{R}$ (the square function over $\mathbb{R}$ clearly dominates the one over $[0,1)$).
\newline

\end{document}